\date{}
\title{\vspace{-0.8cm}Tur\'an number of bipartite graphs with no $K_{t,t}$}
\author{Benny Sudakov\thanks{ETH Zurich, \emph{e-mail}: \textbf{\{benjamin.sudakov,istvan.tomon\}@math.ethz.ch}. Research supported by SNSF grant 200021-149111.}
	\and 
Istv\'an Tomon\footnotemark[1]
}
\theoremstyle{plain}
\newtheorem{theorem}{Theorem}
\newtheorem{claim}[theorem]{Claim}
\newtheorem{lemma}[theorem]{Lemma}
\newtheorem{conjecture}[theorem]{Conjecture}
\theoremstyle{definition}
\DeclareMathOperator{\ex}{ex}
\begin{document}
\maketitle

\begin{abstract}
  The extremal number of a graph $H$, denoted by $\ex(n,H)$, is the maximum number of edges in a graph on $n$ vertices that does not contain $H$. The celebrated K\H{o}v\'ari-S\'os-Tur\'an theorem says that
  for a complete bipartite graph with parts of size $t\leq s$ the extremal number is $\ex(K_{s,t})=O(n^{2-1/t})$. It is also known that this bound is sharp if $s>(t-1)!$. In this paper, we prove that if $H$ is a bipartite graph such that all vertices in one of its parts have degree at most $t$, but $H$ contains no copy of $K_{t,t}$, then $\ex(n,H)=o(n^{2-1/t})$. This verifies a conjecture of Conlon, Janzer and Lee.
\end{abstract}

\section{Introduction}
 Let $H$ be a graph. The extremal number of $H$, denoted by $\ex(n,H)$, is the maximum number of edges in a graph on $n$ vertices that does not contain $H$. By the classical Erd\H{o}s-Stone-Simonovits theorem \cite{ES66,ES46}, we have $\ex(n,H)=(1-\frac{1}{\chi(H)-1}+o(1))\binom{n}{2}$, where $\chi(H)$ is the chromatic number of $H$. Therefore, the order of $\ex(n,H)$ is known, unless $H$ is a bipartite graph. One of the major open problems in extremal graph theory is to understand the function $\ex(n,H)$ for bipartite graphs. The history of such results began in 1954 with the K\H{o}v\'ari-S\'os-Tur\'an theorem \cite{KST54}, which tells us that if $K_{s,t}$ is the complete bipartite graph with vertex classes of size $s\geq t$, then $\ex(n,K_{s,t})=O(n^{2-1/t})$. 
 This result was substantially extended by  F\"uredi \cite{F91} and Alon, Krivelevich, and Sudakov \cite{AKS03}.
 \begin{theorem}\label{thm:degreet}
  Let $H$ be a bipartite graph such that every vertex in one of its parts has degree at most $t$. Then $\ex(n,H)=O(n^{2-1/t})$.	
 \end{theorem}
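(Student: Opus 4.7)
The plan is to combine dependent random choice with a greedy embedding. Write $H$ with parts $A$ and $B$, where $\deg_H(v)\le t$ for every $v\in A$, and set $a=|A|$, $b=|B|$; we may assume $b\ge t$, since otherwise each $v\in A$ has degree at most $b<t$ and the stronger bound $\ex(n,H)=O(n^{2-1/b})$ would already suffice. Suppose $G$ is a graph on $n$ vertices with $e(G)\ge Cn^{2-1/t}$, where $C=C(H)$ will be fixed at the end, and our goal is to locate a copy of $H$ inside $G$.

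The first step is dependent random choice. Pick a uniformly random $t$-subset $T\subseteq V(G)$ and set $U:=N_G(T)$. By convexity of $\binom{x}{t}$,
$$\mathbb{E}|U|\;=\;\binom{n}{t}^{-1}\sum_{v\in V(G)}\binom{d(v)}{t}\;\ge\;\binom{n}{t}^{-1}\,n\binom{2e(G)/n}{t}\;\ge\;c_t\,C^{t},$$
for some $c_t>0$ depending only on $t$, provided $n$ is large enough. Call $S\in\binom{V(G)}{t}$ \emph{bad} if $|N_G(S)|<a+b$; since $\Pr[S\subseteq U]=\binom{|N_G(S)|}{t}/\binom{n}{t}$, summing over bad $S$ shows that the expected number of bad $t$-subsets contained in $U$ is at most $\binom{a+b-1}{t}$. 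Deleting one vertex from each bad $t$-subset of $U$ produces a (random) set $U'\subseteq U$ with
$$\mathbb{E}|U'|\;\ge\;c_t\,C^{t}-\binom{a+b-1}{t}.$$
For $C$ large enough the right-hand side exceeds $b$, so we may fix an outcome with $|U'|\ge b$ in which every $t$-subset of $U'$ has at least $a+b$ common neighbours in $G$.

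The last step is the embedding. Place $B$ at arbitrary distinct vertices $w_1,\dots,w_b\in U'$. Process $A=\{v_1,\dots,v_a\}$ greedily: for the current $v_i$, the images of its already-embedded neighbours form a set $T_i\subseteq\{w_1,\dots,w_b\}\subseteq U'$ of size at most $t$; using $|U'|\ge b\ge t$, extend $T_i$ to a $t$-subset $T'_i\subseteq U'$. By construction $|N_G(T'_i)|\ge a+b$, and $N_G(T'_i)\subseteq N_G(T_i)$, so every vertex of $N_G(T'_i)$ is adjacent to all of $T_i$ and is thus a legitimate candidate for the image of $v_i$. Since fewer than $a+b$ vertices have been used so far, a fresh candidate exists and the embedding extends, yielding a copy of $H$ in $G$.

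The main obstacle, as I see it, is quantitative rather than conceptual: one must verify that a single $C=C(a,b,t)$ works, so that the convexity estimate giving $\mathbb{E}|U|\ge c_tC^{t}$ is honest (which requires the average degree to dominate $t$), and so that this quantity beats the bad-set expectation $\binom{a+b-1}{t}$ by a margin of at least $b$. Once those constants are locked in, both the probabilistic reduction to a "good" set $U'$ and the greedy embedding step are straightforward.
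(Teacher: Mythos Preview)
Your argument is correct and is essentially the dependent random choice proof of Alon, Krivelevich, and Sudakov. Note, however, that the paper does not give its own proof of this statement: Theorem~\ref{thm:degreet} is quoted as background and attributed to F\"uredi \cite{F91} and Alon--Krivelevich--Sudakov \cite{AKS03}, so there is no ``paper's proof'' to compare against. Your write-up matches the standard approach in \cite{AKS03}; the only cosmetic difference is that you sample a uniformly random $t$-subset rather than $t$ independent vertices, which changes the bookkeeping slightly but not the substance. The reduction to $b\ge t$ is harmless (one may simply append isolated vertices to $B$, or replace $t$ by $\min\{t,b\}$), and the greedy embedding at the end is fine since $|N_G(T_i')|\ge a+b$ always leaves an unused image.
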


It is known (see \cite{ARSz99,KRSz96}) that $\ex(n,K_{s,t})=\Theta(n^{2-1/t})$ if $s>(t-1)!$. Moreover, it is believed that $\ex(n,K_{t,t})=\Theta(n^{2-1/t})$ as well. 
This shows that in general if $H$ contains large complete bipartite subgraphs the above theorem is tight.
Thus, it is natural to ask what happens when the forbidden graph $H$ is $K_{t,t}$-free. For $t=2$ this question was considered by Erd\H{o}s \cite{E88} in 1988, who conjectured that if $H$ is a subgraph of a subdivision of another graph then there exists $\mu>0$ such that $\ex(n,H)=O(n^{3/2-\mu})$. A subdivison of a graph $\Gamma$ is obtained by replacing edges of $\Gamma$ by internally vertex disjoint paths of length two. By definition, if $H$ is a subgraph of a subdivision, then it is bipartite, has no $K_{2,2}$ and all the vertices in one of its parts have degree at most two. The conjecture of Erd\H{o}s was recently confirmed by Conlon and Lee \cite{CL19}, and in a stronger form by Janzer \cite{J19}. Conlon and Lee \cite{CL19} further proposed the following more general conjecture.

\begin{conjecture}\label{conj1}
For an integer $t\geq 2$, let $H$ be a $K_{t,t}$-free bipartite graph such that every vertex in one of the vertex classes of $H$ has  degree at most $t$. Then there is $\mu>0$ such that  $\ex(n,H)=O(n^{2-1/t-\mu})$.
\end{conjecture}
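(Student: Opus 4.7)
My plan is to show, by contradiction, that any $n$-vertex $H$-free graph $G$ satisfies $e(G) \le C n^{2-1/t-\mu}$ for constants $C, \mu > 0$ depending only on $H$. So I would assume $m := e(G) \ge C n^{2-1/t-\mu}$ with $\mu$ small compared to $h := |V(H)|$, and, by standard dyadic pigeonholing, pass to a subgraph in which every degree is $\Theta(m/n)$. Write the bipartition of $H$ as $A \cup B$, with $\deg_H(a) \le t$ for every $a \in A$.

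The strategy will be to refine the dependent random choice argument underlying Theorem~\ref{thm:degreet}, using the $K_{t,t}$-freeness of $H$ as extra slack. I would sample $s$ vertices $u_1, \dots, u_s$ of $G$ uniformly at random (with $s$ depending on $h, t, \mu$), set $W = \bigcap_{i=1}^s N_G(u_i)$, and analyze it via two inequalities: $\mathbb{E}|W| \ge n(\bar d/n)^s$ with $\bar d = 2m/n$, and the expected number of $t$-subsets $T \subseteq W$ with $|N_G(T)| < L$ is at most $\binom{n}{t}(L/n)^s$. With only $m \ge n^{2-1/t-\mu}$ these bounds do not produce a $W$ of size $\ge h$ in which every $t$-subset has $\ge h$ common neighbors, as the usual proof of Theorem~\ref{thm:degreet} demands. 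The key observation I would exploit is that, since $H$ contains no $K_{t,t}$, a weaker common-neighbor property together with a quasi-random spread condition on $W$ should suffice: for any $t$-set $T \subseteq B$, at most $t-1$ vertices of $A$ have $T \subseteq N_H(\cdot)$ (else $H \supseteq K_{t,t}$), so the embedding needs $|N_G(\psi(T))|$ only to exceed $t - 1$ by a controlled margin, not by $h$.

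Given such a $W$, the embedding of $H$ would proceed in two phases: first inject $B$ into $W$ uniformly at random, then process the vertices of $A$ in arbitrary order and choose $\psi(a) \in N_G(\psi(N_H(a))) \setminus \psi(V(H))$. A union bound over the conflict events in which a previously placed vertex lands in a dangerous common neighborhood should show that the random injection can be completed with positive probability, provided $|W|$ is polynomial in $n$ and each $|N_G(T)|$ for $T \subseteq W$ is at least $t$. The resulting copy of $H$ in $G$ contradicts $H$-freeness.

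The hard part will be producing the set $W$ with the stated properties when $m$ is polynomially below the KST threshold $n^{2-1/t}$. At that threshold the standard DRC computation gives only $\mathbb{E}|W| = \Omega(1)$ with the expected number of bad $t$-subsets also $\Omega(1)$, so I would need to couple the argument with a supersaturation lemma: a graph with $n^{2-1/t-\mu}$ edges must either contain a highly structured near-$K_{t,t}$-free substructure (which can itself be used to embed $H$ via an inductive application of Theorem~\ref{thm:degreet}), or contain many copies of $K_{t,t}$ concentrated on polynomially many $t$-tuples (which would boost DRC to deliver the required $W$). Balancing these two regimes so as to extract a uniform $\mu > 0$, thereby extending the $t = 2$ cherry-counting arguments of Conlon--Lee \cite{CL19} and Janzer \cite{J19} to arbitrary $t$, is the technical core of the proof.
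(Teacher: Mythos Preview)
The statement you are attempting is Conjecture~\ref{conj1}, which the paper does \emph{not} prove: the concluding remarks state explicitly that ``Conjecture~\ref{conj1} remains open''. What the paper establishes is the weaker Theorem~\ref{thm:mainthm}, namely $\ex(n,H)=o(n^{2-1/t})$ without a polynomial saving. The paper's argument is also quite different from dependent random choice. It passes to an unbalanced bipartite subgraph $G'$ with parts $V$ and $W$ of sizes roughly $n$ and $n^{1-1/t}$, defines a $t$-uniform hypergraph $\mathcal{H}$ on $W$ whose edges are the $t$-sets with at least $t-1$ common neighbours in $V$, and then combines Ramsey's theorem with the Hypergraph Removal Lemma to produce $\Omega(|W|^k)$ copies of $K_k^{(t)}$ in $\mathcal{H}$. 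A separate count (Claim~\ref{claim:badcopies}) shows that almost all of these copies are ``good'', meaning the common neighbourhoods of their hyperedges are pairwise disjoint, and any good copy immediately hosts $H_k\supseteq H$. Your key observation---that $K_{t,t}$-freeness forces at most $t-1$ vertices of $A$ to share any given $t$-set of $B$ as neighbourhood---is exactly why the paper works with $H_k$ and with the threshold $t-1$; but the rest of the machinery is different, and the Removal Lemma is precisely why only $o(\cdot)$ is obtained.

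Even read as a sketch toward the weaker $o(n^{2-1/t})$ bound, your proposal has two real gaps. First, the greedy embedding of $A$ does not follow from the hypothesis that $|N_G(\psi(T))|\ge t$ for each $t$-set $T\subseteq\psi(B)$: the common neighbourhoods for \emph{different} $t$-sets $T$ may overlap or even coincide, so the same $t-1$ vertices of $G$ could serve every $T$, and then at most $t-1$ vertices of $A$ can be placed altogether. Your ``union bound over conflict events'' with a random injection of $B$ does not control these collisions between common neighbourhoods; the paper handles exactly this issue via the good/bad dichotomy in Claims~\ref{claim:copies} and~\ref{claim:badcopies}. Second, the ``supersaturation dichotomy'' you propose to rescue dependent random choice below the K\H{o}v\'ari--S\'os--Tur\'an threshold is not an argument but an aspiration: no concrete lemma is formulated, and neither branch (``highly structured near-$K_{t,t}$-free substructure'' or ``many $K_{t,t}$'s concentrated on few $t$-tuples'') is made precise or shown to yield an embedding. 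Extracting a uniform $\mu>0$ would in particular require circumventing the Removal Lemma; the paper manages this only in the special case of $t$-partite $t$-uniform hypergraphs (see the concluding remarks), and the general case is left open.
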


Despite recent progress on this topic (see, e.g., \cite{CL19, J19, CJL19, GJN19, JMY18, JQ19, KKL18}), this problem remains open for $t\geq 3$.  
Moreover the following weaker form of the above conjecture, proposed by Conlon, Janzer and Lee \cite{CJL19} was open as well. 

\begin{conjecture}\label{conj2}
Let $t\geq 2$ be an integer and let $H$ be a $K_{2,t}$-free bipartite graph such that every vertex in one of the parts of $H$ has  degree at most $t$. Then $\ex(n,H)=o(n^{2-1/t})$.
\end{conjecture}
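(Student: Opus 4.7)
The plan is to prove the stronger statement announced in the abstract: if $H = (A \sqcup B, E_H)$ is a bipartite graph with $\deg_H(a) \le t$ for every $a \in A$ and $H$ contains no copy of $K_{t,t}$, then $\ex(n, H) = o(n^{2-1/t})$. Since $K_{2,t} \subseteq K_{t,t}$, every $K_{2,t}$-free graph is $K_{t,t}$-free, so this implies Conjecture~\ref{conj2} at once. We argue by contradiction: fix $\eps > 0$ and assume there exist arbitrarily large $n$-vertex graphs $G$ with $e(G) \ge \eps n^{2-1/t}$ containing no copy of $H$. A routine iterated deletion of vertices of below-average degree reduces us to the case that $G$ is almost-regular with all degrees in $[cd, Cd]$ for some $d = \Theta(\eps n^{1-1/t})$ and absolute constants $0 < c \le C$; relabel this subgraph as $G$.

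The heart of the argument is a \emph{balanced supersaturation} lemma: under the above assumption on $G$, there exist a constant $C_0 = C_0(H)$ and a family $\mathcal{F}$ of $t$-element subsets of $V(G)$ of size $|\mathcal{F}| \ge \omega(1) \cdot n^{t-1}$ such that every $T \in \mathcal{F}$ satisfies $|N_G(T)| \ge C_0$, and $\mathcal{F}$ is approximately regular in the sense that each vertex of $G$ lies in at most $O(|\mathcal{F}| t / n)$ members. The content is the superlinear $\omega(1)$ factor: at the critical density a typical $t$-set has only $O(1)$ common neighbours, so Jensen alone produces only $\Theta(n^{t-1})$ rich $t$-sets. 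Extracting the extra $\omega(1)$ must exploit the strict gap between the assumed $\eps n^{2-1/t}$ and the target $o(n^{2-1/t})$, via a convexity / second-moment analysis of the codegree distribution that generalises the $t = 2$ Shearer-type inequality used by Janzer~\cite{J19}.

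Given $\mathcal{F}$, we embed $H$ in $G$ by a dependent-random-choice procedure. The $K_{t,t}$-freeness of $H$ is used here decisively: it forces the neighbourhoods $N_H(a) \subseteq B$ of the degree-$t$ vertices $a \in A$ to be pairwise distinct, so there are only a constant number of distinct such neighbourhoods. We choose an injection $\phi : B \hookrightarrow V(G)$ uniformly from those sending every degree-$t$ neighbourhood $N_H(a)$ onto a member of $\mathcal{F}$, and then extend greedily over $A$. For $a$ with $\deg_H(a) < t$ a valid image exists by standard codegree estimates, while for $a$ with $\deg_H(a) = t$ the set $\phi(N_H(a))$ lies in $\mathcal{F}$ by construction, so $|N_G(\phi(N_H(a)))| \ge C_0$; taking $C_0 \ge |V(H)|$ leaves room for an unused image $\phi(a)$.

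The main obstacle is the balanced supersaturation lemma. At the exact critical density $d \asymp n^{1-1/t}$, any polynomial loss in the moment argument destroys the $\omega(1)$ gain and collapses the bound back to K\H{o}v\'ari--S\'os--Tur\'an. Janzer's $t = 2$ argument already requires a delicate partitioning of the edges by codegree followed by a Cauchy--Schwarz inequality; extending it to general $t$ calls for a multidimensional analogue that tracks joint codegrees of $t$-tuples, and we expect making this analogue sharp enough to survive the critical density to be the technical core of the paper.
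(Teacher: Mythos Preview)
Your reduction to the $K_{t,t}$-free statement and to an almost-regular host graph matches the paper, but from that point on the plan diverges, and both of the remaining steps are genuine gaps rather than details.

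The balanced supersaturation lemma is not proved---you say so yourself---and it is not the route the paper takes. Instead of trying to squeeze an $\omega(1)$ factor out of a codegree moment inequality, the paper passes to an \emph{unbalanced} bipartite subgraph $G'$ with parts $W$ and $V$ of sizes roughly $n^{1-1/t}$ and $n$. The $t$-uniform hypergraph $\mathcal{H}$ of rich $t$-sets now lives on the small side $W$, so the $\Theta(n^{t-1})$ rich $t$-sets that plain Jensen already provides constitute a \emph{positive fraction} $\Theta(|W|^{t})$ of all $t$-subsets of $W$. No supersaturation improvement is needed; the work is done instead by Ramsey's theorem (to turn many common neighbours into many edge-disjoint blue copies of $K_k^{(t)}$) together with the Hypergraph Removal Lemma (to upgrade $\Theta(|W|^{t})$ edge-disjoint copies to $\Omega(|W|^{k})$ copies in total).

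Even granting your supersaturation lemma, the embedding step would not follow. Choosing $\phi:B\hookrightarrow V(G)$ so that every degree-$t$ neighbourhood $N_H(a)$ lands in $\mathcal{F}$ is exactly the problem of finding a copy of (a sub-hypergraph of) $K_{|B|}^{(t)}$ inside the $t$-uniform hypergraph with edge set $\mathcal{F}$. On $n$ vertices with only $\omega(n^{t-1})$ edges this hypergraph has density $o(1)$, and for $t\ge 3$ there is no guarantee it contains even a single $K_{t+1}^{(t)}$; dependent random choice does not circumvent this. The paper's unbalanced set-up is precisely what makes the rich-$t$-set hypergraph dense enough for Removal-Lemma machinery to apply, after which a short counting argument isolates a copy of $K_k^{(t)}$ whose $t$-subset neighbourhoods in $V$ are pairwise disjoint. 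A small side remark: $K_{t,t}$-freeness of $H$ does not force the degree-$t$ neighbourhoods $N_H(a)$ to be pairwise distinct, only that no single $t$-set in $B$ is shared by $t$ vertices of $A$; the paper handles this by embedding $H$ into the universal graph $H_k$ in which every $t$-subset of a $k$-set has exactly $t-1$ private neighbours.
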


Similar to Erd\H{o}s, one can also formulate this conjecture as a question on extremal numbers of subdivisions. For a hypergraph $\mathcal{H}$, the \emph{subdivision} of $\mathcal{H}$ is the bipartite graph $\mathcal{H}'$ whose two vertex classes are  $V(\mathcal{H})$ and $E(\mathcal{H})$, and $v\in V(\mathcal{H})$ and $e\in E(\mathcal{H})$ are joined by an edge if $v\in e$. Then Conjecture \ref{conj2} is equivalent to asking whether 
$\ex(n,\mathcal{H}')=o(n^{2-1/t})$ for a subdivision $\mathcal{H}'$ of a $t$-uniform hypergraph. In \cite{CJL19}, this conjecture is proved in the special case $\mathcal{H}$ is a linear hypergraph (that is, any two edges of $\mathcal{H}$ intersect in at most one vertex), which corresponds to the case in which the bipartite graph $H$ is $K_{2,2}$-free. Also, it is mentioned in \cite{CL19} and \cite{CJL19} that Conjecture \ref{conj2} holds in case $H$ is the subdivision of the complete $t$-uniform hypergraph with $t+1$ vertices, or the subdivision of a $t$-partite $t$-uniform hypergraph. 

In this paper we prove Conjecture \ref{conj2} in a very strong form, showing already that Conjecture \ref{conj1} holds with the same upper bound.

\begin{theorem}\label{thm:mainthm}
Let $t\geq 2$ be an integer. Let $H$ be a $K_{t,t}$-free bipartite graph such that every vertex in one of the parts of $H$ has degree at most $t$. Then $\ex(n,H)=o(n^{2-1/t})$.
\end{theorem}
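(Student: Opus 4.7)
The plan is to argue by contradiction: suppose there exist $\delta > 0$ and arbitrarily large $n$ for which some $H$-free graph $G$ on $n$ vertices has $e(G) \geq \delta n^{2-1/t}$. We derive a contradiction by embedding $H$ into $G$. By iteratively deleting low-degree vertices and passing to a bipartite subgraph (a standard clean-up in this area), we may replace $G$ by an almost-regular bipartite graph with parts $X, Y$ and all degrees in $[d, 2d]$ for some $d = \Theta(n^{1-1/t})$.

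Write $H = (A, B)$ with every vertex in $A$ of degree at most $t$; after adding leaves to $B$ we may assume each $a \in A$ has degree exactly $t$. Associate to $H$ the $t$-uniform multihypergraph $\mathcal{H}$ on vertex set $B$ whose hyperedges are $N_H(a)$ for $a \in A$, counted with multiplicity. Then $H$ being $K_{t,t}$-free translates exactly into: every $t$-subset of $B$ appears as a hyperedge of $\mathcal{H}$ with multiplicity at most $t-1$. Embedding $H$ into $G$ then reduces to finding an injection $\phi \colon B \to Y$ together with, for each $a \in A$, a choice of $\phi(a) \in X$ that is a common $G$-neighbor of $\phi(N_H(a))$, with the $\phi(a)$'s distinct. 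By Hall's theorem it suffices to choose $\phi|_B$ so that each $t$-subset $\phi(N_H(a))$ has codegree in $G$ at least its multiplicity in $\mathcal{H}$; since these multiplicities are all bounded by $t-1$, codegree $\geq t-1$ is already enough.

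The crux is then a dichotomy on the codegree distribution of $G$. The double-counting identity $\sum_T \mathrm{codeg}_G(T) = \sum_{v \in X} \binom{d(v)}{t} = \Omega(\delta^t \, n^t)$ supplies the total codegree mass. Either (i) many $t$-subsets of $Y$ have codegree at least $(\log n)^{C}$ for some suitably large $C = C(|V(H)|)$, in which case a Ramsey-style / supersaturation argument on the ``heavy'' codegree hypergraph produces a blow-up sub-structure large enough to host $H$ (since $|V(H)|$ is a fixed constant); or (ii) essentially all codegrees are bounded above by $(\log n)^{C}$, so that the codegree hypergraph is highly spread out. In case (ii) one extracts the required sub-copy of $\mathcal{H}$ by an iterative probabilistic sampling, in the spirit of dependent random choice but weighted by the multiplicity profile of $\mathcal{H}$, while simultaneously producing a system of distinct common-neighbor representatives. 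The $K_{t,t}$-freeness of $H$ is the decisive ingredient here: it lowers the required codegree threshold from $t$ to $t-1$, and the gap between needing genuine $K_{t,t}$-copies (codegree $\geq t$) and needing only near-$K_{t,t}$-copies (codegree $\geq t-1$) is exactly what separates the $O(n^{2-1/t})$ bound from the $o(n^{2-1/t})$ one.

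The main obstacle is implementing the spread case (ii). One must control the joint distribution of codegrees after a random sub-sampling and simultaneously guarantee that the multiplicity-weighted embedding and the system of distinct representatives coexist. Handling this will likely require an inductive cleaning scheme that removes codegree outliers and re-samples, combined with refined concentration estimates for sums of codegree powers, all exploiting in a subtle way the fact that every $t$-subset of $B$ has multiplicity strictly less than $t$ in $\mathcal{H}$.
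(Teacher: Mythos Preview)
Your reduction is correct and matches the paper's: both observe that $K_{t,t}$-freeness bounds the multiplicity of each $t$-set of $B$ by $t-1$, so it suffices to find an embedding of $B$ into the host such that every required $t$-tuple has codegree at least $t-1$, and then select distinct representatives. The paper packages this by passing to the ``universal'' graph $H_k$ (the $(t-1)$-fold subdivision of $K_k^{(t)}$) for $k=|B|$.

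The genuine gap is your case analysis. Case~(ii) is not a proof but a hope: ``iterative probabilistic sampling, in the spirit of dependent random choice'' is not known to produce a prescribed $t$-uniform sub-hypergraph (here, a copy of $K_{|B|}^{(t)}$ inside the codegree-$\geq(t-1)$ hypergraph) when $t\geq 3$. Dependent random choice finds a set of vertices whose \emph{typical} $t$-subset has large common neighbourhood, but upgrading this to $|B|$ vertices where \emph{every} $t$-subset has codegree at least $t-1$, together with a system of distinct representatives, is precisely the hard step, and no cleaning or resampling scheme you describe bypasses it. Case~(i) has the same problem in disguise: many heavy $t$-subsets do not by themselves yield a $K_{|B|}^{(t)}$ in the heavy hypergraph without a further structural tool.

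The paper's route is different and supplies exactly the missing engine. First, it samples one side down to a set $W$ of size $\approx n^{1-1/t}$, tuned so that the codegree-$\geq(t-1)$ hypergraph $\mathcal{H}$ on $W$ is genuinely dense. Second, it colours the edges of $\mathcal{H}$ red or blue according to whether the codegree is at least $(t-1)\binom{k}{t}$; a red $K_k^{(t)}$ would immediately give $H_k$ by greedy choice, so Ramsey's theorem applied inside common neighbourhoods forces many blue $K_k^{(t)}$'s, and a short counting argument extracts $\Omega(|W|^t)$ of them that are pairwise edge-disjoint. Third---and this is the decisive tool absent from your outline---the \emph{Hypergraph Removal Lemma} then guarantees $\Omega(|W|^k)$ copies of $K_k^{(t)}$ in $\mathcal{H}$. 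A final count shows that the number of ``bad'' cliques (two of whose $t$-subsets share a common neighbour in $V$) is a strictly smaller power of $n$, so a good clique exists and hosts $H_k$. Without the Removal Lemma, or an equivalent supersaturation statement for $t$-uniform cliques with $t\geq 3$, your plan does not close.
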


\section{The extremal number of $K_{t,t}$-free bipartite graphs}

\subsection{Preliminaries}

In this section, we introduce our notation (which is mostly conventional), and state a few technical lemmas to prepare the proof of Theorem \ref{thm:mainthm}. We omit floors and ceilings whenever they are not crucial.

If $k$ is a positive integer and $X$ is a set, $X^{(k)}$ denotes the family of $k$ element subsets of $X$. If $G$ is a graph, $V(G)$ and $E(G)$ are the vertex set and edge set of $G$, respectively, and $v(G)=|V(G)|$, $e(G)=|E(G)|$. If $S\subset V(G)$, then $N_{G}(S)$ denotes the \emph{common neighborhood} of $S$, that is, the set of vertices that are joined to every element of $S$ by an edge. If $S=\{x\}$, we write simply $N_{G}(x)$ instead of $N_{G}(S)$. The degree of a vertex $x\in V(G)$ in $G$ is $d_{G}(x)=|N_{G}(x)|$. The complete $t$-uniform hypergraph on $k$ vertices is denoted by $K_{k}^{(t)}$.

In the proof of our main theorem, we use the following technical lemma, which can be found as Lemma 2.2 in \cite{CJL19}. Here, a graph $G$ is \emph{$K$-almost regular} if the maximum degree of $G$ is at most $K$-times the minimum degree. 

\begin{lemma}
	Let $c,\alpha>0$ such that $\alpha<1$. Let $n$ be a positive integer that is sufficiently large with respect to $c$ and $\alpha$. Let $G$ be a graph on $n$ vertices such that $e(G)\geq c n^{1+\alpha}$. Then $G$ contains a $K$-almost regular subgraph $G'$ on $m\geq n^{\frac{\alpha-\alpha^{2}}{4+4\alpha}}$ vertices such that $e(G')\geq \frac{2c}{5}m^{1+\alpha}$ and $K= 20\cdot 2^{\frac{1}{\epsilon^{2}}+1}$.
\end{lemma}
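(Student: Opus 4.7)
The plan is to prove this by a standard dyadic degree-bucketing argument going back to Erd\H{o}s and Simonovits. The underlying intuition is: if the degree sequence of $G$ is spread over a wide range, then restricting to vertices whose degrees lie in a single dyadic window $[d,2d)$ retains a significant fraction of the edges while forcing at least one ``side'' to be almost regular. Iterating on both endpoints and then polishing by trimming low-degree vertices yields an almost-regular subgraph.

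Concretely, I would assign each vertex $v$ to the dyadic class $V_i=\{v:2^{i}\le d_G(v)<2^{i+1}\}$. Since there are at most $\lceil\log_2 n\rceil$ classes, an averaging argument finds $i$ such that the edges incident to $V_i$ number at least of order $c n^{1+\alpha}/\log n$. This already gives a subgraph where one endpoint of each retained edge has $G$-degree within a factor of $2$. Running the same dyadic slicing on the other endpoints yields a subgraph $G''$ in which every vertex has ambient $G$-degree lying in some fixed window $[d,2d)$, at the cost of another $\log n$ factor in the edge count, leaving at least $\Omega(cn^{1+\alpha}/\log^2 n)$ edges.

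The heart of the proof is passing from ``degrees in the ambient $G$ are controlled'' to ``degrees inside the subgraph are controlled''. Here I would iteratively discard vertices whose \emph{current} degree in the working subgraph falls below, say, half of the current average degree, and track the decrease of the vertex count versus the decrease of the edge count. A short double counting argument shows that the process must terminate at a subgraph whose maximum and minimum degrees differ by only a bounded factor $K$, because otherwise the edge count would drop below the floor forced by $\Omega(cn^{1+\alpha}/\log^2 n)$. Solving the resulting recursion yields the stated polynomial vertex bound $m\ge n^{(\alpha-\alpha^{2})/(4+4\alpha)}$ and edge bound $e(G')\ge\frac{2c}{5}m^{1+\alpha}$; the constant $K$ reflects the product of the two dyadic windows together with a small slack absorbed by the trimming step.

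The main obstacle I anticipate is keeping the vertex count polynomial (rather than merely polylogarithmic) while simultaneously forcing the degree ratio to be an absolute constant. Each dyadic/trimming step loses a factor of $\log n$ in the edge density, and if one is not careful the compounded losses push $m$ below the claimed threshold. The delicate point is choosing the trimming threshold so that vertices are discarded at a rate controlled by a \emph{fixed} power of the current size (which produces the exponent $(\alpha-\alpha^{2})/(4+4\alpha)$), while ensuring that the final degree ratio $K$ depends only on $\alpha$ and not on $n$.
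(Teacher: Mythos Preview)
The paper does not prove this lemma at all; it is quoted as Lemma~2.2 of Conlon--Janzer--Lee \cite{CJL19} and used as a black box to derive Lemma~\ref{lemma:maxdeg}. So there is no argument in the paper to compare yours against.

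Regarding your sketch on its own merits: the overall spirit (Erd\H{o}s--Simonovits regularization) is right, but the concrete plan has a genuine gap. Dyadic bucketing on ambient degrees only guarantees that the surviving vertices have \emph{$G$-degrees} in a factor-$2$ window; their degrees \emph{inside the subgraph} can still range from $0$ up to $2d$. You see this and propose to repair it by iteratively deleting low-degree vertices, but that step only pushes the minimum degree up to a constant fraction of the average --- it does nothing to pull the maximum degree down. If the dyadic window you land in has $d$ much larger than the subgraph's average degree (which is perfectly possible: the edges retained are only those with \emph{both} endpoints in the chosen classes, and this can be a tiny fraction of the $\approx m'd$ edges of $G$ incident to those vertices), then no amount of low-degree trimming makes $\max/\min$ bounded. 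The two $\log n$ losses from bucketing are a symptom of the same issue: you are slicing the degree sequence rather than increasing density.

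The actual proof (in \cite{CJL19}, following Erd\H{o}s--Simonovits) handles the high-degree side by a different mechanism: when the maximum degree exceeds $K$ times the minimum, one passes to the neighbourhood of a small set of top-degree vertices, obtaining a subgraph on far fewer vertices but with strictly higher edge density exponent, and iterates. It is this density-increment recursion --- not dyadic bucketing --- that produces the specific exponent $(\alpha-\alpha^{2})/(4+4\alpha)$ and the constant $K$ depending only on $\alpha$. Your proposal asserts that the exponent falls out of ``solving the resulting recursion'', but the recursion you set up (repeated halving plus trimming) does not generate it.
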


More precisely, we need the following immediate consequence of the above result.

\begin{lemma}\label{lemma:maxdeg}
	Let $0<c<10^{-4}$ and $\frac{1}{2}\leq \alpha<1$. Let $n$ be a positive integer that is sufficiently large with respect to $c$ and $\alpha$. Let $G$ be a graph on $n$ vertices such that $e(G)\geq c n^{1+\alpha}$. Then $G$ contains a bipartite subgraph $G'$, whose both vertex classes have size $m\geq \frac{1}{2}n^{\frac{\alpha-\alpha^{2}}{4+4\alpha}}$, $e(G')\geq \frac{c}{10}m^{1+\alpha}$ and the maximum degree of $G'$ is less than $m^{\alpha}$.
\end{lemma}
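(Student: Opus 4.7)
The plan is to combine the previous lemma with a random balanced bipartition of the almost-regular subgraph it produces. First, apply the previous lemma to $G$, obtaining a $K$-almost-regular subgraph $H \subseteq G$ on $m_1 \geq n^{(\alpha-\alpha^2)/(4+4\alpha)}$ vertices with $e(H) \geq \frac{2c}{5}m_1^{1+\alpha}$. Inspecting the proof of that lemma (which passes to a level of a dyadic decomposition of $V(G)$ by vertex degree), we may further assume that all vertex degrees of $H$ lie within a bounded factor of a common value $d = \Theta(c\,m_1^{\alpha})$; in particular, $\Delta(H) \leq C\,c\,m_1^{\alpha}$ for some absolute constant $C$ depending only on $K$.

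Next, choose a uniformly random balanced partition $V(H) = A \cup B$ with $|A|=|B|=m := \lfloor m_1/2\rfloor$, and let $G' := H[A,B]$. Each edge of $H$ ends up between $A$ and $B$ with probability at least $\tfrac{1}{2}$, so by averaging some choice of partition satisfies $e(G') \geq e(H)/2 \geq \frac{c}{5}\,m_1^{1+\alpha}$. Substituting $m_1 = 2m$ and using $2^{1+\alpha} \geq 2$, this yields $e(G') \geq \frac{2c}{5}\,m^{1+\alpha} > \frac{c}{10}\,m^{1+\alpha}$, and the size bound $m \geq \frac{1}{2}\,n^{(\alpha-\alpha^2)/(4+4\alpha)}$ is immediate from the lower bound on $m_1$.

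Finally, $\Delta(G') \leq \Delta(H) \leq C\,c\,m_1^{\alpha} = C\,c\,2^{\alpha}\,m^{\alpha}$, and because $c < 10^{-4}$ is small relative to the absolute constant $2^{\alpha}C$, this quantity is strictly less than $m^{\alpha}$, as required. The only delicate point in the argument is establishing the absolute upper bound $\Delta(H) = O(c\,m_1^{\alpha})$: the previous lemma is stated merely with a ratio bound $\Delta(H) \leq K\delta(H)$, which on its own is insufficient. One must either trace through its proof to extract the upper bound, or, equivalently, preprocess $G$ by deleting edges so that $cn^{1+\alpha} \leq e(G) \leq 2cn^{1+\alpha}$ before invoking the lemma, so that the total number of edges in $H$ is of order $c\,m_1^{1+\alpha}$ rather than merely bounded below by this quantity.
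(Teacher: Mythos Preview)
Your overall strategy matches the paper's: apply the regularization lemma to get an almost-regular subgraph, then take a balanced bipartition. You also correctly isolate the one nontrivial point, namely that the ratio bound $\Delta(H)\le K\,\delta(H)$ alone does not control $\Delta(H)$ in terms of $m_1^{\alpha}$; one needs an \emph{upper} bound $e(H)=O(c\,m_1^{1+\alpha})$ (equivalently $\Delta(H)=O(c\,m_1^{\alpha})$).

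However, your two proposed fixes both have problems. The preprocessing idea---delete edges so that $cn^{1+\alpha}\le e(G)\le 2cn^{1+\alpha}$ before invoking the lemma---does not give what you want. After regularization the vertex count can drop from $n$ to $m_1$ as small as $n^{(\alpha-\alpha^2)/(4+4\alpha)}$, and the only upper bound you inherit is $e(H)\le e(G)\le 2cn^{1+\alpha}$, which says nothing about $e(H)$ relative to $m_1^{1+\alpha}$; in particular it does not force $e(H)=O(c\,m_1^{1+\alpha})$. Your other option, ``inspect the proof of the previous lemma,'' may well work, but you do not carry it out, and your description of that proof (a dyadic decomposition by degree) does not match the argument actually used in the cited reference; as written this is an assertion rather than a proof.

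The paper resolves the issue by a different, self-contained trick: \emph{after} obtaining the $K$-almost-regular subgraph $G_0$ on $m_0$ vertices, it randomly samples edges with probability $p=\tfrac{2c\,m_0^{1+\alpha}}{5\,e(G_0)}$. Standard concentration then yields a $2K$-almost-regular $G_0'$ with $\tfrac{c}{5}m_0^{1+\alpha}\le e(G_0')\le \tfrac{4c}{5}m_0^{1+\alpha}$, whence $\delta(G_0')\le \tfrac{8c}{5}m_0^{\alpha}$ and $\Delta(G_0')<4Kc\,m_0^{\alpha}$. Only then does one take the balanced bipartition. Since $\alpha\ge\tfrac12$ gives $K<1000$ and $c<10^{-4}$, one gets $\Delta(G')<8Kc\,m^{\alpha}<m^{\alpha}$. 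This edge-sampling step is the missing ingredient in your argument.
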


\begin{proof}
	By the previous lemma, $G$ contains a subgraph $G_{0}$ such that $G_{0}$ has $m_{0} \geq n^{\frac{\alpha-\alpha^{2}}{4+4\alpha}}$ vertices, $e(G_{0})\geq \frac{2c}{5}m_{0}^{1+\alpha}$ and $G_{0}$ is $K$-almost regular. 
	Since $\alpha>1/2$ we have $K<1000$. By randomly sampling the edges of $G_{0}$ with probability $p=\frac{2cm_{0}^{1+\alpha}}{5e(G_{0})}$ and using standard concentration arguments, we can find a subgraph $G_{0}'$ of  $G_{0}$ such that $G_{0}'$ is $2K$-almost regular and $\frac{4c}{5}m_{0}^{1+\alpha}\geq e(G_{0}')\geq \frac{c}{5}m_{0}^{1+\alpha}$.
	
	 But then the minimum degree of $G_{0}'$ is at most $\frac{8c}{5}m_{0}^{\alpha}$, so the maximum degree of $G_{0}'$ is less than $4Kcm_{0}^{\alpha}$. By well known folklore results, $V(G_{0}')$ can be partitioned into two sets $U$ and $V$ of size $m=\frac{1}{2}m_{0}$ such that the number of edges connecting $U$ and $V$ is at least $\frac{1}{2}e(G_{0}')\geq \frac{c}{10}m^{1+\alpha}$. Let $G'$ be the bipartite subgraph of $G_{0}'$ with vertex classes $U$ and $V$, then the maximum degree of $G'$ is less than $4Kcm_{0}^{\alpha}<8Kcm^{\alpha}<m^{\alpha}$. Therefore, $G'$ satisfies the desired conditions.     
\end{proof}

We will also use the hypergraph version of the classical Ramsey's theorem \cite{R30}.

\begin{lemma}\label{lemma:ramsey}
	Let $k,t$ be positive integers. Then there exists $\Delta=\Delta(k,t)$ such that any two coloring of the edges of the complete $t$-uniform hypergraph $K_{\Delta}^{(t)}$ contains a monochromatic copy of $K_{k}^{(t)}$.
\end{lemma}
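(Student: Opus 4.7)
The plan is to prove the statement by induction on the uniformity $t$, following the classical Erd\H{o}s-Rado argument. The base case $t=1$ reduces to the pigeonhole principle, so one can take $\Delta(k,1) = 2k-1$.

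For the inductive step, fix $t \geq 2$ and suppose $\Delta(\cdot, t-1)$ exists. Given a 2-coloring $\chi$ of $E(K_N^{(t)})$ for sufficiently large $N$ (to be chosen below), I would construct a sequence of distinct vertices $v_1, \ldots, v_m$ with $m = 2k$, together with a nested chain $V_0 \supseteq V_1 \supseteq \cdots \supseteq V_m$ of vertex sets, as follows. Initialise $V_0$ to be the full vertex set. Assuming $V_{i-1}$ has been defined, pick any $v_i \in V_{i-1}$ and consider the auxiliary 2-coloring $\chi_i$ of the $(t-1)$-subsets of $V_{i-1}\setminus\{v_i\}$ given by
\[
\chi_i(S) = \chi\bigl(\{v_i\}\cup S\bigr).
\]
By the inductive hypothesis, provided $|V_{i-1}\setminus\{v_i\}|\geq \Delta(r_i,t-1)$ for a target value $r_i$, there is a subset $V_i\subseteq V_{i-1}\setminus\{v_i\}$ of size at least $r_i$ on which $\chi_i$ is constantly equal to some color $c_i\in\{0,1\}$. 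Taking $r_m$ to be an absolute constant and defining $r_{i-1}=\Delta(r_i,t-1)+1$ in reverse, the iteration can be completed by choosing $N = r_0$.

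After $m=2k$ steps, pigeonhole yields indices $i_1<\cdots<i_k$ with $c_{i_1}=\cdots=c_{i_k}=c$. I claim that $\{v_{i_1},\ldots,v_{i_k}\}$ spans a monochromatic copy of $K_k^{(t)}$ of color $c$: for any $t$-subset $\{v_{i_{j_1}},\ldots,v_{i_{j_t}}\}$ with $j_1<\cdots<j_t$, the nested structure guarantees $v_{i_{j_\ell}}\in V_{i_{j_1}}$ for every $\ell\geq 2$, so
\[
\chi\bigl(\{v_{i_{j_1}},v_{i_{j_2}},\ldots,v_{i_{j_t}}\}\bigr) = \chi_{i_{j_1}}\bigl(\{v_{i_{j_2}},\ldots,v_{i_{j_t}}\}\bigr) = c_{i_{j_1}} = c.
\]
No step is conceptually difficult; the only bookkeeping is in the recursive choice of the $r_i$ and $N$, which leads to a tower-type bound on $\Delta(k,t)$. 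Since only existence is required, there is no real obstacle.
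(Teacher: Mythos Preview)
Your argument is correct: it is the standard Erd\H{o}s--Rado inductive proof of the hypergraph Ramsey theorem, and all the steps (the auxiliary $(t-1)$-uniform coloring, the nested sets $V_i$, the pigeonhole on the colors $c_i$, and the verification that the selected $k$ vertices span a monochromatic clique) go through as written.

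As for comparison with the paper: the paper does not supply its own proof of this lemma at all. It simply states the result and attributes it to Ramsey's original paper~\cite{R30}, treating it as a black box. So your proposal is not an alternative to the paper's argument but rather a full proof of a result the authors chose to quote. There is nothing to compare in terms of approach; your write-up would serve perfectly well as a self-contained justification if one were desired.
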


Finally, we will use the celebrated Hypergraph Removal Lemma, proved independently by Nagle, R\"odl, Schacht \cite{NRS06} and Gowers \cite{G07}.  

\begin{lemma}\label{lemma:removal}
	Let $k,t$ be positive integers. For every $\beta>0$ there exists $\delta=\delta(k,t,\beta)>0$ such that the following holds. If $\mathcal{H}$ is a $t$-uniform hypergraph on $n$ vertices such that 
one needs to remove at least $\beta n^t$ edges of $\mathcal{H}$ to make it $K_{k}^{(t)}$-free, then $\mathcal{H}$
contains at least $\delta n^{k}$ copies of $K_{k}^{(t)}$.
\end{lemma}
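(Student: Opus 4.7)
The plan is to follow the hypergraph regularity method of Nagle--R\"odl--Schacht and Gowers. The strategy is the standard ``regularize, count, clean'' paradigm, generalizing the classical triangle removal argument (Szemer\'edi regularity plus the triangle counting lemma) from graphs to the $t$-uniform setting.

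First I would invoke a Hypergraph Regularity Lemma: for any $\varepsilon>0$ there exists a constant $M=M(\varepsilon,t)$ such that $\mathcal{H}$ admits a hierarchical partition structure, bounded in complexity by $M$. One first partitions the vertex set into $\ell_{1}\leq M$ parts, then partitions the crossing pairs into blocks, then the crossing triples into blocks, and so on up to the $(t-1)$-tuples, each level refining the previous. The partition is required to be equitable, and to satisfy that for almost all choices of one block at each level $2,3,\dots,t-1$ (a so-called \emph{polyad}), the restriction of $\mathcal{H}$ to the $t$-tuples supported on that polyad is $\varepsilon$-quasirandom, meaning that its density on any large subcollection of such $t$-tuples is close to its overall density on the polyad.

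Next I would combine this with the associated Counting Lemma: once the partition is sufficiently regular, if one picks a consistent family of $\binom{k}{t}$ polyads indexed by the edges of $K_{k}^{(t)}$, each regular with density at least $\eta$, then the number of copies of $K_{k}^{(t)}$ in $\mathcal{H}$ whose $t$-tuples use only those polyads is $(1\pm o(1))\prod d_{i}$ times the total number of $K_{k}^{(t)}$'s supported on the polyad structure; in particular it is at least $c(\eta,k,t,M)\,n^{k}$ for an explicit positive $c$.

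Finally I would run the cleaning step. Choose $\eta$ and $\varepsilon$ very small in terms of $\beta,k,t$ and apply the regularity lemma. Delete every hyperedge of $\mathcal{H}$ that (i) lies in an irregular polyad, (ii) lies in a regular polyad of density at most $\eta$, or (iii) sits on a polyad using too few underlying $(t-1)$-tuples. A standard union-bound over the at most $M$-many polyads shows that the total number of deletions is at most $\beta n^{t}$. If any copy of $K_{k}^{(t)}$ survives in the resulting $\mathcal{H}'$, all of its $\binom{k}{t}$ edges live in regular dense polyads, so the Counting Lemma forces at least $\delta n^{k}$ copies in $\mathcal{H}'\subseteq\mathcal{H}$ for a suitable $\delta=\delta(k,t,\beta)>0$, and the lemma follows. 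The main obstacle is unquestionably the first step: for $t\geq 3$ one cannot simply apply Szemer\'edi regularity to the shadow graph, and instead must regularize a whole tower of auxiliary hypergraphs of uniformities $2,3,\dots,t-1$ in a coupled way, controlling the interaction between levels via an index-increment argument. Producing a Counting Lemma compatible with this tower requires a delicate induction on $t$, and this is the technical heart of \cite{NRS06} and \cite{G07}.
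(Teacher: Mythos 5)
The paper does not prove this lemma; it invokes the Hypergraph Removal Lemma as a black box, citing Nagle--R\"odl--Schacht \cite{NRS06} and Gowers \cite{G07}. Your proposal is a faithful high-level summary of the ``regularize, count, clean'' strategy used in those references, and the cleaning step (delete edges in irregular, sparse, or degenerate polyads, then apply the counting lemma to any surviving clique) is indeed how the removal lemma is deduced once the regularity and counting lemmas are in hand. However, you should be clear-eyed that what you have written is a roadmap, not a proof: the two ingredients you postulate in steps one and two --- the hypergraph regularity lemma with a compatible hierarchy of partitions for all uniformities $2,\dots,t-1$, and the counting lemma that works relative to that hierarchy --- constitute the entire technical content of \cite{NRS06} and \cite{G07}, and they are not something one can ``invoke'' without either citing them (as the paper does) or reproducing many pages of argument. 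In a paper of this kind, the correct move is exactly the one the authors make: state the removal lemma precisely, cite it, and move on. If you want to signal understanding, a one-sentence remark that the proof follows the regularity method would suffice; the detailed polyad discussion you give adds length without adding rigor, since none of the quantitative dependencies ($\varepsilon$, $\eta$, $M$, and how $\delta$ emerges from them) are actually pinned down.
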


\subsection{Overview of the proof}

Despite our proof being quite short, it might help to briefly outline the main ideas. 

Let $H_{k}$ be the bipartite graph with vertex classes $X$ and $Y$ such that $|X|=k$, $|Y|=(t-1)\binom{k}{t}$, and for every $t$-tuple  $S\in X^{(t)}$, there are exactly $t-1$ vertices in $Y$ whose neighborhood is equal to $S$. Clearly, 
for every $H$ there is a large enough integer $k$ such that $H$ is a subgraph of $H_{k}$. Therefore, it is enough to show that $\ex(n,H_{k})=o(n^{2-1/t})$. 

Let us fix an $H_{k}$-free graph $G$ on $n$ vertices with $\epsilon n^{2-1/t}$ edges, where we think of $\epsilon$ as a small constant. Then our goal is  to show that $n$ cannot be arbitrarily large. 
We first pass to a bipartite subgraph with parts $V$ and $W$, where $V$ is of order $n$, and $|W|$ is of order $n^{1-1/t}$. This is in contrast with few  previous papers in the same topic \cite{CJL19, CL19, J19} which work with a bipartite subgraph $G'$ of $G$ in which both parts have roughly the same size. By setting the parameters correctly, the advantage of our first step is that the average size of the common neighborhood in $V$ of the $(t-1)$-tuples of vertices from $W$ is some large constant. Next we consider the $t$-uniform hypergraph $\mathcal{H}$ on $W$ where each $e\in W^{(t)}$ is an edge if it has at least $t-1$ common neighbors in $V$. We color the edges of $\mathcal{H}$ by red and blue such that an edge is red if it has at least $(t-1)\binom{k}{t}$ common neighbors. One can argue that $\mathcal{H}$ cannot have a red  $K_{k}^{(t)}$, since otherwise we can find greedily a copy of $H_k$. Thus, using Ramsey's theorem, we find many blue copies of $K_{k}^{(t)}$. We further prove that one needs to remove many hyperedges to destroy all these blue copies of $K_{k}^{(t)}$. Therefore we can apply the Hypergraph Removal Lemma to show that $\mathcal{H}$ must contain $\Omega(|W|^{k})$ copies of $K_{k}^{(t)}$. Then by counting certain bad copies of $K_{k}^{(t)}$, we conclude that there must exist a copy $R$ such that the common neighborhoods $N_{G'}(S)$ for $S\in E(R)$ are all pairwise disjoint. Using such $R$ as one part of $H_{k}$ we can clearly embed the other part in $\bigcup_{S\in E(R)}N_{G'}(S)$.

\subsection{The proof of Theorem \ref{thm:mainthm}}

In this section, we present the proof of Theorem \ref{thm:mainthm}. Our proof works for all $t\geq 2$ but since the case $t=2$ is already known  by \cite{CL19,J19}, for computational convenience we assume that $t\geq 3$. 

Fix $k$ such that $H$ is contained in $H_{k}$, where $H_{k}$ is the bipartite graph defined in the previous section. We prove that for every $0<\epsilon<10^{-4}$ if $n$ is sufficiently large, then $\ex(n,H_{k})\leq \epsilon n^{2-1/t}$. Let $G$ be a graph with $n$ vertices and at least $\epsilon n^{2-1/t}$ edges, and assume that $G$ does not contain $H_{k}$. By Lemma \ref{lemma:maxdeg}, $G$ has a bipartite subgraph $G'$ with vertex classes $U$ and $V$,  $|U|=|V|=n'>\frac{1}{2}n^{\frac{(1-1/t)}{8t-4}}$ such that $e(G')\geq \frac{\epsilon}{10}(n')^{2-1/t}$, and the maximum degree of $G'$ is at most $(n')^{1-1/t}$. In the rest of the proof, we shall only work with $G'$ instead of $G$, so with slight abuse of notation, let $G:=G'$, $n:=n'$ and $\epsilon:=\frac{\epsilon}{10}$. Clearly, it is enough to prove that $G$ contains $H_{k}$ if $n$ is sufficiently large with respect to $k,t,\epsilon$. 

As the next step, we pass to an even smaller subgraph $G'$ of $G$ with parts of size roughly $n^{1-1/t}$ and $n$. 
This is done using the following claim.

\begin{claim}\label{claim:reduce}
	Let $n^{-1/t}<p<1$. If $n$ is sufficiently large with respect to $\epsilon$ and $t$, then here exists $W\subset U$ such that $\frac{pn}{2}<|W|<2pn$, the graph $G'=G[W\cup V]$ has at least $\frac{p}{4}e(G)$ edges, and $d_{G'}(x)<2pn^{1-1/t}$ holds for every $x\in V$.  
\end{claim}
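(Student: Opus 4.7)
The plan is to sample $W$ by including each vertex of $U$ independently with probability $p$, and to verify via standard concentration inequalities that each of the three required properties holds with probability $1-o(1)$; a union bound then produces a single $W$ realizing all three conditions simultaneously.

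For the size, $|W|$ is $\mathrm{Binomial}(n,p)$ with mean $pn \geq n^{1-1/t}$, and since $t \geq 3$ this mean dominates $\log n$, so a Chernoff bound gives $pn/2 < |W| < 2pn$ with probability $1-o(1)$. For the edge count, I would write
\[
e(G') = \sum_{u \in U} d_G(u)\, \mathbf{1}[u \in W]
\]
as a sum of independent random variables, each bounded by the maximum degree $n^{1-1/t}$ of $G$, with total expectation $p\,e(G)$. Chebyshev's inequality applied with variance at most $p\, n^{1-1/t}\, e(G)$ yields deviation probability $O\bigl(n^{1-1/t}/(p\,e(G))\bigr) = O(1/(p\epsilon n)) = o(1)$, hence $e(G') \geq p\,e(G)/2 > p\,e(G)/4$ with high probability.

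The third condition is the most delicate, since it must hold simultaneously for all $n$ vertices of $V$. For a fixed $x \in V$, the variable $d_{G'}(x)$ is a sum of $d_G(x) \leq n^{1-1/t}$ independent Bernoulli$(p)$'s, so it is stochastically dominated by $\mathrm{Binomial}(n^{1-1/t},p)$ with mean $\mu = pn^{1-1/t}$; the multiplicative Chernoff bound gives
\[
\Pr\bigl[d_{G'}(x) \geq 2\mu\bigr] \leq \exp(-\mu/3) \leq \exp(-n^{1-2/t}/3),
\]
where the last inequality uses $p \geq n^{-1/t}$. For $t \geq 3$ the exponent $n^{1-2/t}$ is at least $n^{1/3}$, which comfortably dominates $\log n$, so a union bound over the $n$ vertices in $V$ handles every $x$ at once. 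This is the main technical point of the proof: it is precisely where both the hypothesis $t \geq 3$ and the lower bound $p > n^{-1/t}$ are used, as they are exactly what makes the Chernoff exponent polynomially large in $n$ and thus beat a union bound over $|V|=n$ events.
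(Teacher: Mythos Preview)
Your proof is correct and follows essentially the same random-sampling-plus-concentration approach as the paper: both select $W$ by including each vertex of $U$ independently with probability $p$ and then verify the three conditions via Chernoff-type bounds and a union bound. The only cosmetic difference is that the paper obtains the edge lower bound by summing over $V$ (using the lower-tail Chernoff estimate on $d_{G'}(x)$ for vertices with $d_G(x)\geq n^{1/2}$), whereas you sum over $U$ and use Chebyshev; your stochastic-domination argument for the degree upper bound is in fact a bit cleaner than the paper's treatment.
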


\begin{proof}
 Pick each element of $U$ with probability $p$, and let $W$ be the set of selected vertices. Then the statement follows by standard concentration arguments. By Chernoff's inequality, with high probability we have $|d_{G'}(x)-pd_{G}(x)|<\frac{1}{2}pd_{G}(x)$ for every $x\in V$ satisfying $d_{G}(x)\geq n^{1/2}$. Also, with high probability, $||W|-pn|\leq \frac{1}{2}pn$. Therefore, there exists a choice for $W$ which satisfies these inequalities. But then every $x\in V$ satisfies $d_{G'}(x)<2pn^{1-1/t}$ as the maximum degree of $G'$ is at most $n^{1-1/t}$. Since $pe(G) \geq \Omega(n^{2-2/t}) \gg n^{3/2}$, we also have
 $$e(G')=\sum_{x\in V}d_{G'}(x)\geq \sum_{x\in V, d_{G}(x)\geq n^{1/2}} \frac{1}{2}pd_{G}(x) \geq \frac{1}{2}pe(G)-n \cdot n^{1/2} \geq \frac{1}{4}pe(G)\,.$$
\end{proof}

We would like to choose $p$ such that the average size of a common neighborhood of a $(t-1)$-tuple of vertices in $V$ is some large constant (independent of $n$). Let $n^{-1/t}<p<1$, which we will specify later, and let $W$ be a subset of $U$ satisfying the properties described in Claim \ref{claim:reduce}. Consider the sum $L=\sum_{C\in V^{(t-1)}}|N_{G'}(C)|.$ We have
\begin{align*}
L&=\sum_{x\in W}\binom{d_{G'}(x)}{t-1} \geq|W|\binom{e(G')/|W|}{t-1} >(t-1)^{-(t-1)}e(G')^{t-1}|W|^{-(t-2)}\\
&>(t-1)^{-(t-1)}\left(\frac{p\epsilon}{4}n^{2-1/t}\right)^{t-1}\left(2pn\right)^{-(t-2)}=\left(\frac{\epsilon}{t-1}\right)^{t-1}2^{-3t+4}pn^{t-1+1/t},
\end{align*}
where the first inequality holds by convexity.

By Ramsey's Theorem (Lemma \ref{lemma:ramsey}), there exists a positive integer $\Delta=\Delta(k,t)$ such that any red-blue coloring of the edges of the hypergraph $K_{\Delta}^{(t)}$  contains either a red or a blue copy of $K_{k}^{(t)}$. Choose $p$ such that $L\geq 2\Delta n^{t-1}$ holds. Then by the previous calculations, we can choose $p=\alpha n^{-1/t}$, where $\alpha=2\Delta (\frac{t-1}{\epsilon})^{t-1}2^{3t-4}$. The important thing to notice is that $\alpha=\alpha(k,t,\epsilon)$ does not depend on $n$. Also, we remark that $\frac{\alpha}{2}n^{1-1/t}< |W|< 2\alpha n^{1-1/t}$, and every $x\in V$ has degree at most $2pn^{1-1/t}=2\alpha n^{1-2/t}$ in $G'$.

Let $\mathcal{H}$ be the $t$-uniform hypergraph on $W$ in which $S\in W^{(t)}$ is an edge if $|N_{G'}(S)|\geq t-1$. Color an edge $S\in E(\mathcal{H})$ red if $|N_{G'}(S)|\geq (t-1)\binom{k}{t}$, and color $S$ blue, otherwise. If $\mathcal{H}$ contains a red clique of size $k$, then $G'$ contains $H_{k}$. Indeed, if $R\subset W$ spans a red clique of size $k$ in $\mathcal{H}$, then for each $S\in R^{(t)}$ one can greedily select a set $Q_{S}\subset N_{G'}(S)$ of $t-1$ vertices such that $Q_{S}$ and $Q_{S'}$ are disjoint if $S\neq S'$. 
This clearly gives a copy of $H_k$, contradiction. Therefore, in what comes, we can assume that $\mathcal{H}$ does not contain a red clique of size $k$.

Let $C\in V^{(t-1)}$ and consider $T=N_{G'}(C)$. Let $r=\lfloor \frac{|T|}{\Delta}\rfloor>\frac{|T|}{\Delta}-1$, and let $T_{1},\dots,T_{r}$ be disjoint sets of size $\Delta$ in $T$. Note that for $i=1,\dots,r$,  $\mathcal{H}[T_{i}]$ is a clique of size $\Delta$ in $\mathcal{H}$. But $\mathcal{H}[T_{i}]$ does not contain a red clique of size $k$, so by the definition of $\Delta$, $\mathcal{H}[T_{i}]$ contains a blue clique of size $k$, let $A_{i}$ be the vertex set of such a clique. Set $Z_C=\{A_{i}:i=1,\dots,r\}$, and let $Z$ be the multiset $\bigcup_{C\in V^{(t-1)}}Z_C$ (that is, we count each $k$-tuple with multiplicity $s$ if it appears in $s$ of the sets $Z_C$ for $C\in V^{(t-1)}$). Then $$|Z|= \sum_{C\in V^{(t-1)}}|Z_{C}|\geq \sum_{C\in V^{(t-1)}}\left(\frac{|N_{G'}(C)|}{\Delta}-1\right)=\frac{L}{\Delta}- \binom{n}{t-1}\geq n^{t-1}.$$

Next, we show that $Z$ contains a large subset in which the size of the intersection of any two elements is less than $t$. 

\begin{claim}
  There exists a constant $\beta=\beta(k,t,\epsilon)>0$ and $Z'\subset Z$ such that $|Z'|\geq \beta |W|^{t}$, and if $A,B\in Z'$, then $|A\cap B|<t$.
\end{claim}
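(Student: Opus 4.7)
The plan is to exploit the constraint that each element of $Z$ is a \emph{blue} $k$-clique, which forces strong bounds on how many elements of $Z$ can share a given $t$-subset. Once this is established, a greedy deletion yields $Z'$.

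For each $t$-subset $e \in W^{(t)}$, let $d(e)$ denote the number of $A \in Z$ (counted with multiplicity) such that $e \subseteq A$. Since every $A \in Z$ is a blue $k$-clique in $\mathcal{H}$, whenever $d(e) > 0$ the $t$-subset $e$ is a blue edge, so $|N_{G'}(e)| < (t-1)\binom{k}{t}$. I claim that
\[
d(e) \leq D := \binom{(t-1)\binom{k}{t}-1}{t-1}.
\]
Indeed, if $A \in Z_C$ contains $e$, then $e \subseteq A \subseteq T_i \subseteq N_{G'}(C)$ for some $i$, which forces $C \subseteq N_{G'}(e)$; and since the blocks $T_1,\dots,T_r$ used to define $Z_C$ are pairwise disjoint, at most one $A_i \in Z_C$ can contain $e$. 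Hence $d(e)$ is bounded by the number of $(t-1)$-subsets of $N_{G'}(e)$, which is at most $D$.

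Now run the following greedy procedure: initialize $Z' = \emptyset$; while $Z$ is nonempty, pick any $A \in Z$, add $A$ to $Z'$, and remove from $Z$ every $B$ (with multiplicity) satisfying $|A \cap B| \geq t$. Each removed $B$ contains at least one $t$-subset of $A$, so the number of elements removed in a single step is at most
\[
\sum_{e \in A^{(t)}} d(e) \;\leq\; \binom{k}{t}\,D.
\]
Thus the process runs for at least $|Z|/(\binom{k}{t}D)$ steps, producing a $Z' \subset Z$ of that size with pairwise intersections strictly less than $t$ by construction.

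Finally, combining the lower bound $|Z| \geq n^{t-1}$ established above with the upper bound $|W| < 2\alpha n^{1-1/t}$ (so $n^{t-1} > (2\alpha)^{-t}|W|^t$), we obtain $|Z'| \geq \beta |W|^t$ with $\beta = \bigl(\binom{k}{t}\,D\,(2\alpha)^t\bigr)^{-1}$, which depends only on $k,t,\epsilon$. The only genuinely substantive step is the bound $d(e) \leq D$; everything else is a direct greedy and bookkeeping, so I do not anticipate a real obstacle beyond making the constants match cleanly.
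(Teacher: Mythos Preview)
Your proof is correct and follows essentially the same approach as the paper: both bound, for each $t$-subset $e$ appearing in some $A\in Z$, the number of elements of $Z$ containing $e$ by a constant (using that $e$ is blue so $N_{G'}(e)$ is small and each contributing $C\in V^{(t-1)}$ must lie in $N_{G'}(e)$ with at most one $A_i\in Z_C$ per $C$), and then extract $Z'$ via the greedy/bounded-degree independent set argument. The paper phrases this as finding an independent set in an auxiliary graph of maximum degree $\binom{k}{t}\binom{u}{t-1}$ with $u=(t-1)\binom{k}{t}$, which is exactly your greedy deletion in different language.
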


\begin{proof}
Let $D$ be the graph on vertex set $Z$ in which $A$ and $B$ are joined by an edge if $|A\cap B|\geq t$. Let $A\in Z$ and let $S\in A^{(t)}$. Then $S$ is blue, so $|N_{G'}(S)|\leq (t-1)\binom{k}{t}=u$. But then there are at most $\binom{u}{t-1}$ sets  $C\in V^{(t-1)}$ such that $S\subset N_{G'}(C)$. For each such $C$, at most one element of $Z_C$ contains $S$, so in total at most $\binom{u}{t-1}$ elements of $Z$ contain $S$. Hence, as $A$ has $\binom{k}{t}$ subsets of size $t$, $A$ has degree at most $d=\binom{k}{t}\binom{u}{t-1}$ in $D$.

But then $D$ contains an independent set of size at least $\frac{|Z|}{d+1}\geq \frac{n^{t-1}}{d+1}>\beta|W|^{t}$, where $\beta=\frac{1}{(2\alpha)^t(d+1)}$. Let $Z'$ be such an independent set. 
\end{proof}

Note that by our construction, $Z'$ corresponds to a family of copies of $K_{k}^{(t)}$ in $\mathcal{H}$ such that no two copies share a hyperedge. Let $M$ be the total number of copies of $K_{k}^{(t)}$ in $\mathcal{H}$.

\begin{claim}\label{claim:copies}
	There exists a constant $\gamma=\gamma(k,t,\epsilon)$ such that $M\geq \gamma n^{(t-1)k/t}$.  
\end{claim}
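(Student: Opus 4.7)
The plan is to apply the Hypergraph Removal Lemma directly to $\mathcal{H}$, using the set $Z'$ constructed in the previous claim as a certificate that $\mathcal{H}$ is far from being $K_k^{(t)}$-free.

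First I would observe that the copies of $K_k^{(t)}$ corresponding to the elements of $Z'$ are not merely vertex-wise almost disjoint but in fact pairwise \emph{edge-disjoint}: by construction, any two $A,B\in Z'$ satisfy $|A\cap B|<t$, so no $t$-element subset of $W$ can lie in both $A^{(t)}$ and $B^{(t)}$. Consequently, if one removes a single edge from $\mathcal{H}$, one kills at most one of the cliques encoded by $Z'$. Hence, to make $\mathcal{H}$ entirely $K_k^{(t)}$-free one must remove at least $|Z'|\ge \beta|W|^{t}$ edges.

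Next I would invoke Lemma \ref{lemma:removal} on $\mathcal{H}$, viewing it as a $t$-uniform hypergraph on $|W|$ vertices. With parameter $\beta=\beta(k,t,\epsilon)>0$ coming from the previous claim, the lemma yields a constant $\delta=\delta(k,t,\beta)>0$ such that
\[
M \;\ge\; \delta\, |W|^{k}.
\]
Finally, using the lower bound $|W|\ge \frac{\alpha}{2}\,n^{1-1/t}$ established before the claim, I would conclude
\[
M \;\ge\; \delta\left(\frac{\alpha}{2}\right)^{k} n^{(1-1/t)k} \;=\; \gamma\, n^{(t-1)k/t},
\]
where $\gamma=\delta(\alpha/2)^{k}$ depends only on $k,t,\epsilon$ (since $\alpha$, $\beta$, and $\delta$ do).

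The argument has essentially no obstacles beyond bookkeeping: the only substantive input is the observation that $Z'$ is edge-disjoint, which follows immediately from the intersection property $|A\cap B|<t$, and the rest is a direct application of the removal lemma. The fact that the exponent $(t-1)k/t$ matches $|W|^{k}$ up to the constant $\alpha^{k}$ is precisely why $p$ was calibrated so that $|W|=\Theta(n^{1-1/t})$ in the reduction step.
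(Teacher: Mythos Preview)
Your proposal is correct and follows essentially the same approach as the paper: you use the edge-disjointness of the cliques in $Z'$ (from $|A\cap B|<t$) to certify that at least $\beta|W|^{t}$ edges must be removed, apply the Hypergraph Removal Lemma to obtain $M\ge\delta|W|^{k}$, and then substitute $|W|\ge(\alpha/2)n^{1-1/t}$ to get $\gamma=\delta(\alpha/2)^{k}$, exactly as in the paper.
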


\begin{proof}
In order to destroy every copy of $K_{k}^{(t)}$ in $\mathcal{H}$, one needs to remove at least one hyperedge from every element of $Z'$, which results in the removal of at least $\beta|W|^{t}$ edges. 
Let $\delta=\delta(k,t,\beta)$ be the constant given by the Hypergraph Removal Lemma (Lemma \ref{lemma:removal}).
Then $M\geq \delta |W|^{k}\geq \delta (\frac{\alpha}{2})^{k}n^{(t-1)k/t}$. Choosing $\gamma=\delta(\frac{\alpha}{2})^{k}$ completes the proof.
\end{proof}

Let us say that a copy $R$ of $K_{k}^{(t)}$ in $\mathcal{H}$ is \emph{bad} if there exists two distinct $t$-tuples $S,S'\in E(R)$ such that $N(S)\cap N(S')\neq \emptyset$, otherwise, say that $R$ is \emph{good}. Clearly, if there exists a good copy of $K_{k}^{(t)}$, then $G'$ contains $H_{k}$. Indeed, if $R$ is a good copy, then  for every $S\in R^{(t)}$, let $Q_{S}$ be any $(t-1)$-element subset of $N_{G'}(S)$, then the sets $Q_{S}$ for $S\in E(R)$ are pairwise disjoint, so there is copy of $H_{k}$ with vertex set $R\cup \bigcup_{S\in R^{(t)}}Q_{S}$. To show that there is a good copy of $K_{k}^{(t)}$, let us count the number of bad copies.
  
  \begin{claim}\label{claim:badcopies}
  	There exists a constant $\gamma'=\gamma'(k,t,\epsilon)$ such that the number of bad copies of $K_{k}^{(t)}$ is at most $\gamma'n^{(k(t-1)-1)/t}$.
  \end{claim}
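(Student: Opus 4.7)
The plan is to count bad copies $R$ by specifying a witness: an ordered pair $(S,S')$ of distinct $t$-subsets of $R$ with $N_{G'}(S)\cap N_{G'}(S')\neq\emptyset$, together with the remaining $k-|S\cup S'|$ vertices of $R$ inside $W$. Since each bad $R$ gives rise to at most $\binom{k}{t}(\binom{k}{t}-1)$ such triples $(S,S',R)$, it suffices to bound the number of such triples, and we will further stratify by the intersection size $j=|S\cap S'|\in\{0,1,\dots,t-1\}$.

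For each $j$, first count the pairs $(S,S')$ with $|S\cap S'|=j$ which have a common neighbor $v\in V$. Recall that every $v\in V$ satisfies $d_{G'}(v)\leq 2\alpha n^{1-2/t}$, and that any common neighbor of $S$ and $S'$ in $G'$ lies in $V$ and satisfies $S\cup S'\subset N_{G'}(v)$. Summing over $v\in V$ gives at most
\[
n\binom{2\alpha n^{1-2/t}}{j}\binom{2\alpha n^{1-2/t}-j}{t-j}\binom{2\alpha n^{1-2/t}-t}{t-j} \leq C_{1} n\cdot n^{(1-2/t)(2t-j)}
\]
such pairs, where $C_1=C_1(k,t,\epsilon)$. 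Then, having fixed $S\cup S'$, the number of ways to extend it to a $k$-element subset of $W$ is at most $\binom{|W|}{k-2t+j}\leq C_2 n^{(1-1/t)(k-2t+j)}$, using $|W|<2\alpha n^{1-1/t}$.

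Multiplying these bounds and simplifying, the exponent of $n$ becomes
\[
1+\Bigl(1-\tfrac{2}{t}\Bigr)(2t-j)+\Bigl(1-\tfrac{1}{t}\Bigr)(k-2t+j)=\frac{k(t-1)+j-t}{t}.
\]
This exponent is maximized at $j=t-1$, where it equals $(k(t-1)-1)/t$; for smaller $j$ the exponent is strictly smaller. Summing over the constant number of values of $j$ and accounting for the constant overcounting factor yields the claim with a suitable $\gamma'=\gamma'(k,t,\epsilon)$.

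The only slightly delicate point is ensuring the correct degree bound is applied on the right side of the bipartition: common neighbors of $S,S'\subset W$ live in $V$, so we must use the bound $d_{G'}(v)\le 2\alpha n^{1-2/t}$ for $v\in V$ (not the weaker bound $|W|\le 2\alpha n^{1-1/t}$ on the other side). This is exactly what gains the factor $n^{1/t}$ over the total count $M\geq \gamma n^{(t-1)k/t}$ from Claim~\ref{claim:copies}, so that eventually the number of good copies will dominate, as needed.
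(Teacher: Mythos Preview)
Your argument is correct and follows the same idea as the paper: a bad copy $R$ yields a vertex $x\in V$ with $S\cup S'\subset N_{G'}(x)$, and one then counts by choosing $x$, a piece of $R$ inside $N_{G'}(x)$, and the remainder of $R$ in $W$. Two small remarks. First, the sentence ``since each bad $R$ gives rise to at most $\binom{k}{t}(\binom{k}{t}-1)$ such triples'' points the wrong way; what you actually use (and what is true) is that each bad $R$ gives rise to \emph{at least one} such triple, so the number of bad copies is at most the number of triples. Second, the stratification by $j=|S\cap S'|$ is unnecessary: since $S\neq S'$ forces $|S\cup S'|\geq t+1$, the paper simply chooses $x\in V$, then any $t{+}1$ vertices of $R$ inside $N_{G'}(x)$, then the remaining $k-t-1$ vertices in $W$, giving directly
\[
\sum_{x\in V}\binom{d_{G'}(x)}{t+1}|W|^{k-t-1}\leq n\,(2\alpha n^{1-2/t})^{t+1}(2\alpha n^{1-1/t})^{k-t-1}=(2\alpha)^{k}n^{(k(t-1)-1)/t},
\]
which is exactly your dominant $j=t-1$ term without the case analysis.
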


  \begin{proof}
     If $R$ is a bad copy of $K_{k}^{(t)}$ in $\mathcal{H}$, then there are two sets $S,S'\in E(R)$ such that $N_{G'}(S)\cap N_{G'}(S')$ is non-empty. Let $x\in N_{G'}(S)\cap N_{G'}(S')$. Then $N_{G'}(x)$ contains $S\cup S'$. This implies that $|N_{G'}(x)\cap V(R)|\geq |S\cup S'|\geq t+1$.
      Therefore, summing over all the vertices $x\in V$ we have that
      the number of bad copies of $K_{k}^{(t)}$ is at most $$\sum_{x\in V} \binom{d_{G'}(x)}{t+1}|W|^{k-t-1}\leq n(2\alpha n^{1-2/t})^{t+1}(2\alpha n^{1-1/t})^{k-t-1}=(2\alpha)^{k}n^{(k(t-1)-1)/t}.$$
     Setting $\gamma'=(2\alpha)^{k}$ suffices.
  \end{proof}
To conclude the proof, note that if $n$ is sufficiently large as a function of  $k,t,\epsilon$, then by Claims \ref{claim:copies} and \ref{claim:badcopies} there is a good copy of $K_{k}^{(t)}$, implying $G'$ contains $H_{k}$, a contradiction.

\section{Concluding remarks}

Although Conjecture \ref{conj1} remains open, our proof of Theorem \ref{thm:mainthm} can be slightly modified to confirm the conjecture for the following general family of bipartite graphs $H$. If $\mathcal{H}$ is a hypergraph, define the \emph{$r$-fold subdivision} of $\mathcal{H}$, denoted by $\mathcal{H}^{[r]}$, as follows: let the two vertex classes of $\mathcal{H}^{[r]}$ be $V(\mathcal{H})$ and $E=E_{1}\cup\dots\cup E_{r}$, where $E_{1},\dots,E_{r}$ are disjoint copies of $E(\mathcal{H})$, and $e\in E_{i}$ is joined to $v \in V(\mathcal{H})$ by an edge if $v\in e$. Theorem \ref{thm:mainthm} is equivalent to the statement that if $\mathcal{H}$ is a $t$-uniform hypergraph, then $\ex(n,\mathcal{H}^{[t-1]})=o(n^{2-1/t})$. However, in case $\mathcal{H}$ is $t$-partite, we can do slightly better.

\begin{theorem}
	Let $\mathcal{H}$ be a $t$-partite $t$-uniform hypergraph. Then there exists $\mu>0$ such that $$\ex(n,\mathcal{H}^{[t-1]})=O(n^{2-1/t-\mu}).$$
\end{theorem}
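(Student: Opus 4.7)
Let $\tilde{H}_k$ denote the bipartite graph with vertex classes $X = X_1 \sqcup \cdots \sqcup X_t$ (each $|X_i| = k$) and $Y$, where for every transversal $S = \{x_1, \ldots, x_t\}$ (with $x_i \in X_i$) there are exactly $t-1$ vertices of $Y$ whose neighborhood equals $S$; equivalently, $\tilde{H}_k = (K_{k,\ldots,k}^{(t)})^{[t-1]}$. Any $t$-partite $t$-uniform hypergraph $\mathcal{H}$ embeds into $K_{k,\ldots,k}^{(t)}$ for $k$ large enough, so $\mathcal{H}^{[t-1]} \subseteq \tilde{H}_k$, and it suffices to prove $\ex(n,\tilde{H}_k) = O(n^{2-1/t-\mu})$ for some $\mu = \mu(k,t) > 0$.

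The plan is to run the proof of Theorem \ref{thm:mainthm} nearly verbatim, but to replace the combination of Ramsey's theorem and the Hypergraph Removal Lemma---neither of which gives any polynomial quantitative control---by the hypergraph K\H{o}v\'ari--S\'os--Tur\'an bound, i.e.\ the Erd\H{o}s box theorem: any $K_{k,\ldots,k}^{(t)}$-free $t$-partite $t$-uniform hypergraph on parts of size $m$ has at most $C_{k,t}\, m^{t - 1/k^{t-1}}$ edges, together with its standard supersaturation strengthening (which yields at least $c_{k,t}\,\tau^{k^t} m^{tk}$ copies once the density $\tau$ exceeds the threshold). The polynomial saving $m^{-1/k^{t-1}}$ over the trivial bound $m^t$ is precisely what produces the positive $\mu$.

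Concretely, suppose $G$ has at least $n^{2-1/t-\mu}$ edges and contains no $\tilde{H}_k$. Apply Lemma \ref{lemma:maxdeg} with $\alpha = 1 - 1/t - \mu$ to pass to a bipartite almost-regular subgraph, and then Claim \ref{claim:reduce} with $p = n^{-1/t + \mu'}$ for a small $\mu' > 0$ chosen below. This yields a bipartite $G'$ on $W \cup V$ with $|W| \sim pn$, $V$-degrees bounded by $O(pn^{1-1/t-\mu})$, and the same convexity estimate giving $L = \sum_{C \in V^{(t-1)}} |N_{G'}(C)| = \Omega(n^{t-1})$. Define $\mathcal{H}$ on $W$ exactly as in the main proof. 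Now take a uniform random partition $W = W_1 \sqcup \cdots \sqcup W_t$ into equal parts and let $\mathcal{H}^*$ be the $t$-partite $t$-uniform hypergraph of its transversal edges; then $e(\mathcal{H}^*) = \Omega(n^{t-1})$. Color $S \in E(\mathcal{H}^*)$ red if $|N_{G'}(S)| \geq (t-1)k^t$ and blue otherwise. If there is a red transversal copy of $K_{k,\ldots,k}^{(t)}$ in $\mathcal{H}^*$, then $\tilde{H}_k$ embeds greedily into $G'$ (each of the $k^t$ transversals has enough room in its common neighborhood for $t-1$ fresh vertices), so assume not. The Erd\H{o}s box theorem then bounds the red transversal edges by $O(|W|^{t - 1/k^{t-1}})$, which for $\mu' < (1-1/t)/(tk^{t-1})$ is $o(n^{t-1})$, so $e(\mathcal{H}^*_{\mathrm{blue}}) = \Omega(n^{t-1})$; supersaturation then gives at least $c\cdot n^{tk - t\mu' k^t}$ blue transversal copies of $K_{k,\ldots,k}^{(t)}$.

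Finally, call a blue copy \emph{bad} if two of its $t$-edges share a common $V$-neighbor; as in the proof of Claim \ref{claim:badcopies}, any such pair forces $t+1$ vertices of the copy to lie in a single $N_{G'}(x)$, yielding a bad count of at most $\sum_{x \in V}\binom{d_{G'}(x)}{t+1}|W|^{tk-t-1} = O(p^{tk}\, n^{tk - 1/t - (t+1)\mu})$ after applying the maximum degree bound. Comparing with the supersaturation lower bound, the ratio of good to bad copies is at least $n^{1/t + (t+1)\mu - t\mu' k^t}$, which is $n^{\Omega(1)}$ whenever $\mu, \mu'$ are sufficiently small compared to $1/k^t$, and a good blue copy then delivers the sought embedding of $\tilde{H}_k$, a contradiction. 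The main obstacle is the simultaneous calibration of $p, \mu', \mu$: we need $p \gtrsim n^{-1/t}$ to maintain $e(\mathcal{H}^*) = \Omega(n^{t-1})$, $\mu' < (1-1/t)/(tk^{t-1})$ for the Erd\H{o}s box saving to beat $n^{t-1}$, and $t\mu' k^t < 1/t + (t+1)\mu$ for the good count to dominate the bad count. All three constraints are satisfiable by taking $\mu$ and $\mu'$ of order $1/(t^2 k^t)$, depending only on $k$ and $t$.
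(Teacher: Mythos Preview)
Your overall strategy is exactly the one the paper has in mind: rerun the main proof with $\epsilon = n^{-\mu}$, and replace the Hypergraph Removal Lemma by the polynomial supersaturation/counting statement for $t$-partite $t$-uniform hypergraphs (Erd\H{o}s's box argument). The calibration of $\mu,\mu'$ and the good/bad copy comparison you sketch are also essentially correct.

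However, there is a genuine gap. You assert that $e(\mathcal{H}^*)=\Omega(n^{t-1})$ directly from the convexity bound $L=\sum_{C\in V^{(t-1)}}|N_{G'}(C)|=\Omega(n^{t-1})$, but these are different quantities and the implication does not follow without further work. The point is that every $t$-subset $S\subset N_{G'}(C)$ is indeed an edge of $\mathcal{H}$, but a single edge $S$ can arise from as many as $\binom{|N_{G'}(S)|}{t-1}$ different sets $C$, and for \emph{red} $S$ this multiplicity can be as large as $n^{(t-1)(1-1/t-\mu)}$; combined with the Erd\H{o}s box bound on the number of red edges, the red contribution to $\sum_{C}\binom{|N_{G'}(C)|}{t}$ is not $o(n^{t-1})$. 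So one cannot deduce that $\mathcal{H}$ (or $\mathcal{H}^*$) has $\Omega(n^{t-1})$ edges, let alone $\Omega(n^{t-1})$ blue edges, from $L$ alone.

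The paper handles this precisely via the step you dismissed: Ramsey's theorem. For each $C$ one partitions $N_{G'}(C)$ into blocks of size $\Delta=\Delta(k,t)$ and extracts a \emph{blue} clique from each block; the resulting multiset $Z$ of blue cliques has $|Z|\gtrsim n^{t-1}$, and since every $t$-set inside a blue clique has bounded common neighbourhood, the multiplicity is $O_{k,t}(1)$, yielding $|Z'|\ge \beta |W|^t$ edge-disjoint blue cliques. Your worry that Ramsey ``gives no polynomial quantitative control'' is misplaced here: Ramsey is applied only inside sets of \emph{constant} size $\Delta$, so it contributes nothing to the $n$-dependence. The sole non-polynomial ingredient in the main proof is the Removal Lemma, and \emph{that} is what must be (and is) replaced. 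Keep the Ramsey/$Z'$ step intact, swap Lemma~\ref{lemma:removal} for Erd\H{o}s supersaturation, and your parameter analysis then goes through.
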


To prove this theorem, one can use the proof of our main result. The only difference is in the application of the Hypergraph Removal Lemma. When we need to count the copies of a $t$-partite hypergraph instead of $K_{k}^{(t)}$, we can choose  $\delta$ to be a polynomial of $\beta$ (whose degree depends only on the $t$-partite hypergraph in question). This fact follows from an approach used by Erd\H{o}s \cite{E64} to bound extremal numbers of complete $t$-uniform $t$-partite hypergraphs (for an application of this technique for counting copies of such hypergraphs see, e.g.,  
Proposition 3.6 in \cite{CFS16}). Therefore, one can take $\epsilon=n^{-\mu}$ with some small enough $\mu$ in order for our arguments to work. We omit further details.


\begin{thebibliography}{99}
\bibitem{AKS03}
 N. Alon, M. Krivelevich, and B. Sudakov.
 ''Tur\'an numbers of bipartite graphs and related Ramsey-type questions.''
 Combin. Probab. Comput., 12: 477--494, 2003.
 
\bibitem{ARSz99}
N. Alon, L. R\'onyai, and T. Szab\'o.
''Norm-graphs: variations and applications.''
J. Combin. Theory Ser. B, 76: 280--290, 1999.	

\bibitem{CFS16}
D. Conlon, J. Fox, and B. Sudakov.
''Short proofs of some extremal results II.''
J. Combin. Theory Ser. B, 121: 173--196, 2016.


\bibitem{CJL19}
D. Conlon, O. Janzer, and J. Lee.
''More on the extremal number of subdivisions.''
arXiv preprint, arXiv1903.1063, 2019. 
 
\bibitem{CL19}
D. Conlon and J. Lee.
''On the extremal number of subdivisions.''
To appear in Int. Math. Res. Not.

\bibitem{E64}
P.  Erd\H{o}s.
''On  extremal  problems  of  graphs  and  generalized graphs.''
Israel J. Math., 2: 183--190, 1964.

\bibitem{E88}
P. Erd\H{o}s. 
''Problems and results in combinatorial analysis and graph theory.''
Discrete Math., 72: 81--92, 1988.

\bibitem{ES66}
 P. Erd\H{o}s and M. Simonovits. 
 ''A limit theorem in graph theory.''
 Studia Sci. Math. Hungar., 1: 51--57, 1966.
 
 \bibitem{ES46}
P. Erd\H{o}s and A. H. Stone.
''On the structure of linear graphs.''
Bull. Amer. Math. Soc., 52: 1087--1091, 1946.

\bibitem{F91}
 Z. F\"uredi.
''On a Tur\'an type problem of Erd\H{o}s.''
Combinatorica, 11: 75--79, 1991.

\bibitem{GJN19}
A. Grzesik, O. Janzer, and Z. L. Nagy.
''The Tur\'an number of blow-ups of tree.''
arXiv preprint,  arXiv:1904.07219, 2019.

\bibitem{G07}
 T. Gowers.
''Hypergraph regularity and the multidimensional Szemer\'edi theorem.''
Annals of Mathematics, 166: 897--946, 2007.

\bibitem{J19}
O. Janzer.
''Improved bounds for the extremal number of subdivisions.''
arXiv preprint, arXiv:1809.00468, 2018.

\bibitem{JMY18}
T. Jiang, J. Ma, and L. Yepremyan.
''On Tur\'an exponents of bipartite graphs.''
arXiv preprint,  arXiv:1806.02838, 2018.

\bibitem{JQ19}
T. Jiang, and Y. Qiu.
''Tur'an numbers of bipartite subdivisions.''
arXiv preprint, arXiv:1905.08994, 2019.

\bibitem{KKL18}
D. Y. Kang, J. Kim, and H. Liu.
''On the rational Tur\'an exponents conjecture.''
arXiv preprint, arXiv:1811.06916, 2018.

\bibitem{KRSz96}
J. Koll\'ar, L. R\'onyai, and T. Szab\'o.
''Norm-graphs and bipartite Tur\'an numbers.''
Combinatorica, 16: 399--406, 1996.

\bibitem{KST54}
T. K\H{o}v\'ari, V. S\'os, and P. Tur\'an.
''On a problem of K. Zarankiewicz.''
Colloq. Math., 3: 50--57, 1954.
	
\bibitem{NRS06}
B. Nagle, V. R\"odl, and M. Schacht.
''The counting lemma for regular $k$-uniform hypergraphs.''
Random Structures \& Algorithms, 28(2): 113--179, 2006. 

\bibitem{R30}
F. P. Ramsey.
''On a problem of formal logic.''
Proceedings of the London Mathematical Society, 30: 264--286, 1930.
\end{thebibliography}
\end{document}